\newtheorem{theorem}{Theorem}
\newtheorem{lemma}{Lemma}
\newtheorem{proposition}{Proposition}
\newtheorem{remark}{Remark}
\newtheorem{example}{Example}
\newtheorem{definition}{Definition}
\newtheorem{corollary}{Corollary}
\newtheorem{problem}{Problem}
\newtheorem{conjecture}{Conjecture}
\def\R{{\mathbb R}}
\newcommand{\ce}{\mathcal E}
\newcommand{\beq}{\begin{equation}}
\newcommand{\eeq}{\end{equation}}
\newcommand{\beqna}{\begin{eqnarray*}}
\newcommand{\eeqna}{\end{eqnarray*}}
\newcommand{\beqn}{\begin{equation*}}
\newcommand{\eeqn}{\end{equation*}}
\newcommand{\bprop}{\begin{proposition}}
\newcommand{\eprop}{\end{proposition}}
\newcommand{\bt}{\begin{theorem}}
\newcommand{\et}{\end{theorem}}
\newcommand{\bex}{\begin{example}}
\newcommand{\eex}{\end{example}}
\newcommand{\bc}{\begin{corollary}}
\newcommand{\ec}{\end{corollary}}
\newcommand{\bl}{\begin{lemma}}
\newcommand{\el}{\end{lemma}}
\newcommand{\bprob}{\begin{problem}}
\newcommand{\eprob}{\end{problem}}
\newcommand{\br}{\begin{remark}}
\newcommand{\er}{\end{remark}}
\newcommand{\bd}{\begin{definition}}
\newcommand{\ed}{\end{definition}}
\begin{document}

\title[Unique determination of  ellipsoids by their dual volumes]
{Unique determination of  ellipsoids by their dual volumes and the moment problem}

\author{Sergii Myroshnychenko, Kateryna Tatarko, Vladyslav Yaskin}\thanks{The   authors were supported in part by NSERC. The first author was supported in part by PIMS postdoctoral fellowship.}

\address{Department of Mathematical and Statistical Sciences, University of Alberta, Edmonton, Alberta T6G 2G1, Canada} \email{myroshny@ualberta.ca}
  \email{tatarko@ualberta.ca}  \email{yaskin@ualberta.ca}

\subjclass[2010]{Primary 52A20, 52A39}

\keywords{Convex bodies, intrinsic volumes, dual volumes, ellipsoids.}

\maketitle

\begin{abstract}
	Gusakova and Zaporozhets conjectured that ellipsoids in $\mathbb R^n$ are unique\-ly determined  (up to an isometry) by their Steiner polynomials.  Petrov and Tarasov  confirmed this conjecture in $\mathbb R^3$. In this paper we solve the dual problem. 	
	We show that any ellipsoid in $\R^n$ centered at the origin is uniquely determined (up to an isometry)  by its dual Steiner polynomial. To prove this result we reduce it to a problem of moments. As a by-product we give an alternative proof of the result of Petrov and Tarasov.
\end{abstract}

\section{Introduction}

The study of behavior of volume under the Minkowski (vector) addition is the main focus of the Brunn-Minkowski theory. 
Let $K$ be a convex body in $\mathbb R^n$. Denote by $B_2^n$ the unit Euclidean ball in $\mathbb R^n$. The classical Steiner formula asserts that for every $\epsilon > 0$,
 $$\mathrm{vol}(K+\epsilon B_2^n) =  \sum_{i=0}^n\kappa_{n-i} V_i(K) \epsilon^{n-i},$$
where $\mathrm{vol}$ is the Lebesgue measure on $\mathbb R^n$, the addition $+$ is the Minkowski addition, and $\kappa_{n-i}$ is the volume of $B_2^{n-i}$. The coefficients   $V_i(K)$ are known as the intrinsic volumes of $K$. The geometric interpretation of some of these quantities is the following: $V_n(K)$ is the volume of $K$, $V_{n-1}(K)$ is (a multiple of) the surface area, $V_1(K)$ is (a multiple of) the mean width. A thorough discussion of intrinsic volumes can be found in \cite{Sch}.

  Gusakova and Zaporozhets   asked if ellipsoids are uniquely determined by their intrinsic volumes (up to an isometry). Namely, they conjectured the following. 
 \begin{conjecture} \label{conj:main1}
 Let $\ce_1$ and $\ce_2$	be two ellipsoids in $\R^n$ such that $  V_1(\ce_1)= V_1(\ce_2)$, $  V_2(\ce_1)= V_2(\ce_2)$,..., $  V_n(\ce_1)= V_n(\ce_2)$. Then $\ce_1$ and $\ce_2$ are congruent.
\end{conjecture}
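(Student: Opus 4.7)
After a rotation we may take both $\ce_1$ and $\ce_2$ to be centered at the origin with principal axes along the coordinate axes; write their squared semi-axes as $(\lambda_1,\dots,\lambda_n)$ and $(\mu_1,\dots,\mu_n)$, listed in decreasing order. The conjecture reduces to showing that these two unordered tuples coincide once all $V_k$ agree. The first step is to express each intrinsic volume as an explicit symmetric function of the squared semi-axes. Kubota's formula yields
$$V_k(\ce)=\binom{n}{k}\frac{\kappa_n}{\kappa_k\kappa_{n-k}}\int_{G(n,k)}\mathrm{vol}_k(P_L\ce)\,d\nu(L),$$
and for an ellipsoid one has $\mathrm{vol}_k(P_L\ce)=\kappa_k\sqrt{\det(M|_L)}$ with $M=\mathrm{diag}(\lambda_1,\dots,\lambda_n)$. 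A Cauchy--Binet expansion in Pl\"ucker coordinates gives
$$\det(M|_L)=\sum_{|I|=k}\lambda_I\,\omega_I(L)^2,\qquad \lambda_I=\prod_{i\in I}\lambda_i,$$
so each $V_k(\ce)$ becomes an explicit integral over $G(n,k)$ of the square root of a quadratic form in the Pl\"ucker coordinates.

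The second step, mirroring the philosophy of the dual proof in this paper, is to recast these $n$ data as moments of a single measure built from $\ce$. A natural device is the identity
$$\sqrt{t}=\frac{1}{2\sqrt{\pi}}\int_0^{\infty}\bigl(1-e^{-ts}\bigr)\,s^{-3/2}\,ds,$$
applied to $t=\det(M|_L)$; interchanging with the Grassmannian integral should rewrite $V_k(\ce)$ as a generalized moment of a measure $\nu_{\ce}$ on $[0,\infty)$ whose support and weights encode the $\lambda_i$. The final step would be to show that $\nu_{\ce}$ is atomic, with at most $n$ mass points determined by the $\lambda_i$, so that the truncated moment problem forces $\nu_{\ce_1}=\nu_{\ce_2}$, whence $\{\lambda_i\}=\{\mu_i\}$.

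The principal obstacle is the imbalance between data and unknowns: only $n$ moments $V_1,\dots,V_n$ are available, while classical Hamburger--Stieltjes uniqueness asks for infinitely many, and even the truncated problem typically requires $2n-1$. Any proof must therefore inject additional algebraic structure---Hankel-matrix positivity, Newton-type inequalities among the intrinsic volumes, or an identity that collapses half of the needed moments---to close the gap. A secondary difficulty is the square root appearing in $\mathrm{vol}_k(P_L\ce)$: passing to the dual problem (radial function in place of support function) eliminates it and places the problem squarely in the well-behaved framework of Stieltjes moments, which is presumably why the dual version treated in this paper is tractable while the primal Conjecture~\ref{conj:main1} remains open for $n\geq 4$.
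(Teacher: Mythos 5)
The statement you were asked to prove is, in the paper, a \emph{conjecture} that remains open for $n\ge4$: the authors prove only the dual version (Theorem~\ref{thm:main1}) and reprove the $n=3$ case of Petrov and Tarasov. Your submission is therefore a research plan rather than a proof, and you say so; the closing diagnosis is sound. The concrete gap is structural. After Kubota and Cauchy--Binet you have $V_k(\ce)$ as an integral over $G(n,k)$ of $\sqrt{\sum_{|I|=k}\lambda_I\,\omega_I(L)^2}$; applying the subordination identity for $\sqrt{t}$ and interchanging gives, for each fixed $k$, an integral in $s$ of $1-\int_{G(n,k)}\exp\bigl(-s\sum_I\lambda_I\omega_I^2\bigr)\,d\nu_k$. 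Both the integrand and the Grassmannian depend on $k$, so these $n$ numbers are \emph{not} moments $\int u^{p_k}\,d\nu_\ce(u)$ of a single measure, and Lemma~\ref{lem:moments} has nothing to act on. This is precisely what makes the dual problem tractable: there every $\widetilde V_i(\ce)=\frac1n\int_{S^{n-1}}\rho_\ce^i\,d\theta$ is a power of one function over one sphere, and the Gaussian trick in Lemma~\ref{MainFormula} recasts equality of dual volumes as $\int_0^\infty u^{i-1}F(u)\,du=0$ for a single $F$, whose sign changes are then counted.

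Your plan also diverges from the route the paper actually takes for $n=3$, which avoids Kubota entirely. There one uses the ellipsoid identity $V_i(\ce)=\tfrac{\kappa_i}{\kappa_n\kappa_{n-i}}V_n(\ce)V_{n-i}(\ce^\circ)$ together with~(\ref{relation_dual}) to translate $(V_1,V_2,V_3)$ into $(\widetilde V_4,\widetilde V_{-1},\widetilde V_3)$, then invokes the already-proved dual theorem. That translation is peculiar to small $n$: for $n\ge4$ the middle intrinsic volumes $V_k$, $2\le k\le n-2$, do not reduce to a single dual volume of $\ce$, which is consistent with the conjecture being open there. Any serious attack on general $n$ needs either a way to force a common moment structure on the $k$-dependent Grassmannian integrals, or inequality-type rigidity among the $V_k(\ce)$ strong enough to deliver uniqueness, as you yourself anticipate.
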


 Petrov and Tarasov \cite{PT} confirmed this conjecture in $\mathbb R^3$. For higher dimensions, the problem is still open. 

The purpose of this paper is to show that a similar question for dual volumes has a positive answer in all dimensions. Dual volumes were introduced by Lutwak \cite{L} within the framework of the  dual Brunn-Minkowski theory. In this theory the Minkowski addition of convex bodies is replaced by the radial addition of star bodies (see the next section for definitions of these concepts). The dual version of the Steiner formula  asserts that
$$\mathrm{vol}(K\,\tilde{+} \,\epsilon B_2^n) =   \sum_{i=0}^n{ n\choose i} \widetilde V_i(K) \epsilon^{n-i},$$
where $K$ is a star body in $\mathbb R^n$  and $\tilde{+}$ is the radial addition.
The coefficients   $\widetilde V_i(K)$ are called the  dual   volumes. Note that $\widetilde V_n(K)$ is equal to the volume of $K$.
Denoting by $\rho_K$ the radial function of $K$, one can write the dual volumes of $K$ as follows:
\begin{equation}\label{tildeVi} 
\widetilde V_i (K) = \frac{1}{n} \int_{S^{n-1}} \rho_K^i(\theta) \, d\theta,
\end{equation} 
where the integration is with respect to the spherical Lebesgue measure.

 Note that   while the intrinsic volumes are invariant under translations, the dual volumes depend on the choice of the origin. Both the intrinsic volumes and dual volumes are invariant under orthogonal transformations. 

Our main result is the following. 

\begin{theorem} \label{thm:main1}
	Let $\ce_1$ and $\ce_2$	be two ellipsoids in $\R^n$, $n\ge 2$, centered at the origin such that $\widetilde V_1(\ce_1)=\widetilde V_1(\ce_2)$, $\widetilde  V_2(\ce_1)= \widetilde V_2(\ce_2)$,..., $\widetilde  V_n(\ce_1)=\widetilde V_n(\ce_2)$. Then $\ce_1$ and $\ce_2$ are congruent.
\end{theorem}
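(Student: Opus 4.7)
By applying suitable orthogonal transformations to each of $\ce_1,\ce_2$ and using the orthogonal invariance of the dual volumes, one may assume both ellipsoids are axis-aligned, with semi-axes $(a_1,\ldots,a_n)$ and $(a'_1,\ldots,a'_n)$ respectively. Congruence of $\ce_1$ and $\ce_2$ is then equivalent to the multisets $\{a_j\}$ and $\{a'_j\}$ agreeing. Writing $b_j=1/a_j^2$ and $W(\theta):=\sum_j b_j\theta_j^2=\rho_{\ce_1}(\theta)^{-2}$, formula~\eqref{tildeVi} becomes
\[
n\,\widetilde V_i(\ce_1)=\int_{\sn}W(\theta)^{-i/2}\,d\theta,\qquad i=1,\ldots,n,
\]
and analogously for $\ce_2$.

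The plan is to repackage the $n$ dual volumes $\widetilde V_i$ as Mellin-transform data about a single algebraic function of $(b_1,\ldots,b_n)$. First I would establish the Dirichlet--Liouville-type identity
\[
\int_{\sn}\bigl(1+s\,W(\theta)\bigr)^{-n/2}\,d\theta \;=\; \frac{2\pi^{n/2}}{\Gamma(n/2)}\,\Phi(s),\q s\ge 0,\q \Phi(s):=\prod_{j=1}^{n}(1+sb_j)^{-1/2},
\]
which follows by rewriting the sphere integral as a Gaussian integral on $\rn$ in polar coordinates. Combining it with the Beta-function identity
\[
t^{-i/2}\;=\;\frac{1}{B(i/2,(n-i)/2)}\int_0^\infty s^{\,i/2-1}(1+st)^{-n/2}\,ds \q (0<i<n),
\]
applied pointwise to $t=W(\theta)$ and then integrating over the sphere, I obtain
\[
\widetilde V_i(\ce_1)\;=\;\frac{2\pi^{n/2}}{n\,\Gamma(n/2)\,B\!\left(\tfrac{i}{2},\tfrac{n-i}{2}\right)}\int_0^\infty s^{\,i/2-1}\,\Phi(s)\,ds,\q i=1,\ldots,n-1,
\]
while $\widetilde V_n(\ce_1)=\kappa_n\,a_1\cdots a_n$ encodes $\prod_j b_j$. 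Hence the assumption $\widetilde V_i(\ce_1)=\widetilde V_i(\ce_2)$ for all $i$ becomes equivalent to the equality $\prod_j b_j=\prod_j b'_j$ together with the equality of the $n-1$ Mellin values of $\Phi$ at the points $\beta=1/2,1,\ldots,(n-1)/2$.

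Observe that $\Phi(s)^{-2}=\prod_j(1+sb_j)$ is a polynomial of degree $n$ in $s$ with constant coefficient $1$ and leading coefficient $\prod_j b_j$, both already known from the data; its intermediate $n-1$ coefficients are the elementary symmetric functions $e_1(b),\ldots,e_{n-1}(b)$, which along with $e_n=\prod_j b_j$ pin down the multiset $\{b_j\}$. The theorem therefore reduces to the following moment problem: within the $(n-1)$-parameter family of functions $\Phi(s)=\prod_j(1+sb_j)^{-1/2}$ (with $\prod_j b_j$ fixed), a function is uniquely determined by its Mellin transform at the $n-1$ points $\beta=1/2,1,\ldots,(n-1)/2$.

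The main obstacle is proving this uniqueness, which I expect to carry out as follows. The substitution $t=\sqrt s$ converts the fractional Mellin values into classical integer-order moments
\[
m_k \;:=\; 2\int_0^\infty t^{\,k}\Bigl(\prod_{j=1}^n (1+t^2 b_j)\Bigr)^{-1/2}dt, \q k=0,1,\ldots,n-2,
\]
of the finite positive measure $d\tilde\mu(t):=2\bigl(\prod_j(1+t^2 b_j)\bigr)^{-1/2}dt$ on $[0,\infty)$. A direct computation shows that the Jacobian of the map $(e_1,\ldots,e_{n-1})\mapsto(m_0,\ldots,m_{n-2})$ is, after symmetrization of the resulting iterated integral, a positive integral of Vandermonde type involving $\prod_{i<j}(\sqrt{u_j}-\sqrt{u_i})^2$, and hence nowhere vanishing on the relevant domain. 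This gives local injectivity, and combining it with a connectivity and properness argument produces the global injectivity needed to conclude $\{b_j\}=\{b'_j\}$. From the multiset $\{b_j\}$ one recovers $\{a_j\}=\{1/\sqrt{b_j}\}$, and the orthogonal invariance of the dual volumes then gives the congruence of $\ce_1$ and $\ce_2$.
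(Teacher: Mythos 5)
You set up the representation correctly: the Gaussian/Dirichlet--Liouville identity and the Beta-function trick together give exactly the paper's formula \eqref{FormulaW_i1} (your Mellin transform of $\Phi$ at $\beta=i/2$ is the paper's $\int_0^\infty u^{i-1}(\prod(1+u^2/a_j^2))^{-1/2}\,du$ after $s=u^2$, and $\Gamma(n/2)B(i/2,(n-i)/2)=\Gamma(i/2)\Gamma((n-i)/2)$), and the observation that the data encodes $e_n$ plus $n-1$ moments of $\Phi$ is also the right reduction. Up to this point you are on the same road as the paper.

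The gap is in the uniqueness step, which is the heart of the theorem. You assert that the Jacobian of $(e_1,\ldots,e_{n-1})\mapsto(m_0,\ldots,m_{n-2})$ (at fixed $e_n$) is a nowhere-vanishing ``Vandermonde-type'' integral, but you never carry out that computation, and it is not routine: one must differentiate through the symmetric-function parametrization (where the $b_j$ depend implicitly on $e_1,\ldots,e_{n-1}$) and then organize the resulting iterated integral into a signed sum that is manifestly one-signed. More seriously, even granting nonvanishing of the Jacobian, the passage ``local injectivity $+$ connectivity $+$ properness $\Rightarrow$ global injectivity'' does not hold as stated. A local diffeomorphism on a connected open set need not be injective (think of $z\mapsto z^2$ on $\mathbb C\setminus\{0\}$); you need a genuine Hadamard-type argument, which requires properness as a map onto a simply connected target, and here the domain is a proper open subset of $\mathbb R^{n-1}$ (the region where the associated monic polynomial has $n$ positive real roots and fixed constant coefficient) whose image you have not identified. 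None of these hypotheses are checked, and checking them is not easier than proving the theorem directly.

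The paper avoids all of this. After obtaining $\int_0^\infty u^{i-1}F(u)\,du=0$ for $i=1,\ldots,n-1$, where $F$ is the difference of the two integrands, it observes that $F(u)=0$ is equivalent to $\prod_j(1+u^2/a_j^2)=\prod_j(1+u^2/b_j^2)$, a polynomial identity in $u^2$ whose constant terms (both $1$) and leading terms (equal because $\widetilde V_n(\ce_1)=\widetilde V_n(\ce_2)$ gives $\prod a_j=\prod b_j$) cancel; hence $F$ has at most $n-2$ positive zeros unless it vanishes identically. A one-line sign-change/moment argument (Lemma~\ref{lem:moments}, or equivalently choosing a polynomial $P$ of degree $\le n-2$ that matches the sign changes of $F$) then forces $F\equiv0$, so the elementary symmetric polynomials agree and the semi-axes coincide as multisets. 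I would encourage you to replace your Jacobian step with this root-counting argument: it is self-contained, elementary, and closes the proof.
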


As one can see, the right-hand side of formula (\ref{tildeVi}) makes sense for all real $i$.  This allows to use  (\ref{tildeVi}) as a definition of dual volumes of any order $i$. 
In view of this remark, in the statement of Theorem \ref{thm:main1} the collection of the dual volumes $\{ \widetilde V_{i}  \}_{i=1}^n$can be replaced by any $n$-tuple  of the form $\{ \widetilde V_{i_k} \}_{k=1}^n$, where $i_1$,..., $i_n$ are distinct non-zero real numbers from the interval $(-2,n]$. In some cases one can take numbers from a larger interval, as will be discussed later. 

To prove Theorem \ref{thm:main1} we reduce it to a   problem of moments. Using this idea we also give an alternative proof of the result of Petrov and Tarasov for the intrinsic volumes of ellipsoids in $\mathbb R^3$. In the final section we investigate Conjecture \ref{conj:main1} in the setting of ellipsoids of revolution.

\section{Preliminaries}
Let $K$ be a convex body in $\mathbb R^n$, that is a convex compact set with non-empty interior. The support function of $K$ is defined by 
$$h_K(x) = \max_{y\in K} \langle x,y\rangle,\qquad x\in \mathbb R^n. $$
The first intrinsic volume of $K$ is a multiple of its mean width:
\begin{equation}\label{1stIV}
V_1(K) = \frac{1}{\kappa_{n-1}} \int_{S^{n-1}} h_K(\theta) d\theta,
\end{equation}
where the integration is with respect to the spherical Lebesgue measure.

Let  $\ce$ be an ellipsoid in $\mathbb R^n$ with semi-axes $a_1, \dots, a_n$. Its volume is given by
$$\mathrm{vol}(\ce) = \kappa_n a_1  \cdots  a_n.$$  Below we will always assume that $\ce$ is centered at the origin and its axes coincide with the coordinate axes; i.e.,
\begin{equation}\label{ce}
\ce=\left\{x\in \mathbb R^n: \frac{x_1^2}{a_1^2} + \cdots + \frac{x_n^2}{a_n^2}\le 1\right\}.
\end{equation} 
The support function of this ellipsoid $\ce$ is given by
$$h_{\ce} (x) = \left({a_1^2} {x_1^2} + \cdots + {a_n^2}  {x_n^2}\right)^{1/2}, \qquad x\in \mathbb R^n.$$

For a convex body $K\subset \mathbb R^n$ its polar body $K^\circ$ is defined as follows:
$$K^\circ = \{ x\in \mathbb R^n: \langle x,y\rangle \le 1, \forall y \in K\}.$$
For the ellipsoid $\ce$ defined above by (\ref{ce}), its polar body is also an ellipsoid given by 
$$\ce^\circ =\left\{x\in \mathbb R^n: {a_1^2} {x_1^2} + \cdots + {a_n^2}  {x_n^2}\le 1\right\}.$$

We say that   a compact set   $K\subset \mathbb R^n$   is star-shaped about the origin $o$ if for every point $x\in K$ each point of the interval $[o,x)$ is an interior point of $K$. 
The {\it Minkowski functional} of $K$ is defined by
\begin{align*}
\|x\|_K=\min \{a\ge 0: x \in aK \}, \qquad x\in \mathbb R^n.
\end{align*}
We say that $K$ is a {\it star body} if it is compact, star-shaped about the origin and its Minkowski functional  is a  continuous function on $\mathbb R^n$.

The Minkowski functional of   ellipsoid (\ref{ce}) is given by 
$$\|x\|_{\ce} = \left(\frac{x_1^2}{a_1^2} + \cdots + \frac{x_n^2}{a_n^2}\right)^{1/2}, \qquad x \in \mathbb R^n.$$
For the Euclidean ball $B_2^n$, we will denote its Minkowski functional simply by $|x|$. 

If $K$ is a convex body containing the origin in its interior then we have the following relation:
$$\|x\|_K = h_{K^\circ}(x), \qquad x\in \mathbb R^n.$$

The {\it radial function} of a star body  $K$ is defined by \begin{align*}
\rho_K(\xi) = \max\{a \ge 0 :a \xi \in K\}, \quad \xi\in S^{n-1}.
\end{align*}
 Clearly,   $\rho_K(\xi) =  \|\xi\|_K^{-1}$ for $\xi\in S^{n-1}$.

If $K$ and $L$ are star bodies, and $\alpha$ and $\beta$ are positive numbers, then the radial sum $\alpha K\,\tilde{+}\, \beta L$ is  the star body with the radial function
$$\rho_{\alpha K\tilde{+} \beta L} = \alpha \rho_{K}+\beta \rho_{L}.$$

Let us now recall some basic facts about the Gamma-function and fractional derivatives. If    $z\in \mathbb C$ with $\Re(z)>0$, then 
$$\Gamma (z) = \int_0^\infty t^{z-1} e^{-t}\, dt.$$
The Gamma-function can be extended to the set $ \mathbb C\setminus (-\mathbb N\cup\{0\})$ using the relation $$\Gamma (z+1)=z\Gamma (z).$$

Let $f$ be a continuous integrable function on $\mathbb [0,\infty)$ which is infinitely smooth in some neighborhood of zero.   If $q\in \mathbb C$, $-1<\Re(q)<0$, the fractional derivative of $f$ of order $q$ at zero is defined by
$$f^{(q)}(0) = \frac{1}{\Gamma(-q)}\int_0^\infty t^{-1-q} f(t)\,dt.$$
This formula is in fact valid for all  $q\in \mathbb C$, $\Re(q)<0$, if $ t^{-1-q} f(t)$ decays at infinity sufficiently fast.

For $q$ with  $\Re(q)>0$ that are not positive integers, the fractional derivatives are defined using analytic continuation;  see, e.g., \cite{GS} or \cite{K}. 
In particular, if $m$ is a positive integer, then for $m-1<\Re(q)<m$ we have 
$$f^{(q)}(0) = \frac{1}{\Gamma(-q)}\int_0^\infty t^{-1-q} \left(f(t)- \sum_{k=0}^{m-1} \frac{f^{(k)}(0)}{k!}t^k\right)\,dt.$$
Fractional derivatives of positive integer orders coincide with ordinary derivatives (up to a sign). 

The reader is referred to the books \cite{G}, \cite{K}, \cite{Sch} for other facts on the topics discussed above. 

\section{Main results}\label{SectionMR}

It is known that the intrinsic volumes of an ellipsoid and its polar are related by the following formula. 
\begin{equation}\label{KZ} V_i(\ce) = \frac{\kappa_{i}}{\kappa_n\kappa_{n-i}}V_n(\ce) V_{n-i}(\ce^\circ);
\end{equation} 
see \cite[Prop. 4.8]{KZ}.

Let us prove a similar formula for the dual volumes. 

\begin{lemma}
	Let $\ce$ be a centered ellipsoid in $\mathbb R^n$. Then for all real $i$ we have 
	 \begin{equation}\label{relation_dual}
	\widetilde{V}_i(\ce) = \frac{1}{\kappa_n} \widetilde{V}_n(\ce)\widetilde{V}_{n-i}(\ce^\circ).
	\end{equation}
\end{lemma}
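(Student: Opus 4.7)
The plan is to express $\widetilde V_i(\ce)$ via the integral representation (\ref{tildeVi}) and, through a linear change of variables on the sphere $\sn$, transform it into an integral whose integrand is a power of the Minkowski functional of the polar ellipsoid $\ce^\circ$. Since $\rho_\ce=\|\cdot\|_\ce^{-1}$ on $\sn$, the starting point is
$$\widetilde V_i(\ce) = \f{1}{n}\int_{\sn}\|\theta\|_\ce^{-i}\,d\theta.$$

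Next I would introduce the diagonal matrix $A=\mathrm{diag}(1/a_1,\dots,1/a_n)$, so that $|A\theta|=\|\theta\|_\ce$, while $A^{-1}=\mathrm{diag}(a_1,\dots,a_n)$ satisfies $|A^{-1}\phi|=\|\phi\|_{\ce^\circ}$. The relevant linear substitution on the sphere is $\phi=A\theta/|A\theta|$; its Jacobian can be derived in one line by evaluating $\int_{\rn} g(|Ax|)\,dx$ in polar coordinates in two ways (once via $y=Ax$, once by splitting off the radial integral), which yields the classical identity
$$\int_{\sn}\f{f(A\theta/|A\theta|)}{|A\theta|^n}\,d\theta = \f{1}{\det A}\int_{\sn}f(\phi)\,d\phi.$$

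Using $|A\theta|=1/|A^{-1}\phi|$ to rewrite the measure under this substitution, the defining integral becomes
$$\int_{\sn}|A\theta|^{-i}\,d\theta = \f{1}{\det A}\int_{\sn}|A^{-1}\phi|^{i-n}\,d\phi = (a_1\cdots a_n)\int_{\sn}\|\phi\|_{\ce^\circ}^{-(n-i)}\,d\phi.$$

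Dividing by $n$ gives $\widetilde V_i(\ce)=(a_1\cdots a_n)\,\widetilde V_{n-i}(\ce^\circ)$, and substituting $a_1\cdots a_n = \widetilde V_n(\ce)/\kappa_n$ (which follows at once from $\widetilde V_n(\ce)=\mathrm{vol}(\ce)=\kappa_n a_1\cdots a_n$) yields formula (\ref{relation_dual}). The only potentially delicate step is the Jacobian identity for the spherical substitution; however, this is a standard computation that applies uniformly for all real $i$ (requiring no analytic continuation), which is in line with the fact that $\widetilde V_i$ itself is defined by (\ref{tildeVi}) for all real exponents.
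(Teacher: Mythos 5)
Your proof is correct, and the underlying mechanism is the same as the paper's: a linear change of variables $x\mapsto Ax$ with $A=\mathrm{diag}(1/a_1,\dots,1/a_n)$, combined with polar coordinates, converts $\|\cdot\|_\ce$ to the Euclidean norm and introduces the factor $\det A^{-1}=a_1\cdots a_n=\widetilde V_n(\ce)/\kappa_n$. The difference is in packaging. The paper avoids ever stating the spherical push-forward formula explicitly: it multiplies by $1=\tfrac{n-i}{2^{n-i}-1}\int_1^2 r^{n-i-1}\,dr$ to inflate the sphere integral into an integral over the annulus $2B_2^n\setminus B_2^n$, applies the linear substitution there, and then peels the radial direction back off over $2\ce^\circ\setminus\ce^\circ$. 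You instead state the spherical Jacobian identity
$$\int_{\sn}\frac{f(A\theta/|A\theta|)}{|A\theta|^n}\,d\theta=\frac{1}{\det A}\int_{\sn}f(\phi)\,d\phi$$
directly and apply it with $f(\phi)=|A^{-1}\phi|^{i-n}$. This is arguably cleaner and makes it transparent that no analytic continuation is needed for any real $i$. One small expositional gap: evaluating $\int_{\rn}g(|Ax|)\,dx$ in two ways only yields the Jacobian identity for $f\equiv 1$; to get it for general $f$ you should instead compare the two evaluations of $\int_{\rn}g(|Ax|)\,f(Ax/|Ax|)\,dx$, which is the same one-line computation but with the angular factor carried along.
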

\begin{proof}
 The formula is trivial when $i=n$, so we will assume that $i\ne n$.
 We have
 	\begin{align*} 
 \widetilde{V}_i(\ce) &= \frac{1}{n}\int_{S^{n-1}} \|\theta\|_{\ce}^{-i} \, d\theta  = \frac{n-i}{n(2^{n-i}-1)}\int_{S^{n-1}} \int_1^2 r^{n-i-1} \, dr \, \|\theta\|_{\ce}^{-i} \, d\theta\\
 & = \frac{n-i}{n(2^{n-i}-1)}\int_{2 B_2^{n}\setminus B_2^n}  \|x\|_{\ce}^{-i} \, dx.
 \end{align*}
 Assuming without loss of generality that $$\|x\|_{\ce} = \left(\frac{x_1^2}{a_1^2} + \cdots + \frac{x_n^2}{a_n^2}\right)^{1/2}, \qquad x \in \mathbb R^n,$$
 and making a  change of variables, we get
	\begin{align*}   \widetilde{V}_i(\ce) &=   \frac{n-i}{n(2^{n-i}-1)}\int_{2 B_2^{n}\setminus B_2^n}  \left(\frac{x_1^2}{a_1^2} + \cdots + \frac{x_n^2}{a_n^2}\right)^{-i/2} \, dx\\
	&= a_1\dots a_n \frac{n-i}{n(2^{n-i}-1)}\int_{2 \ce^\circ\setminus \ce^\circ}  \left( {x_1^2}  + \cdots +  {x_n^2} \right)^{-i/2} \, dx\\
		&= a_1\dots a_n \frac{n-i}{n(2^{n-i}-1)} \int_{S^{n-1}}
	 \int_{\|\theta\|_{\ce^\circ}^{-1}}^{2\|\theta\|_{\ce^\circ}^{-1}} r^{n-i-1} \, dr \, d\theta\\
	 	&= a_1\dots a_n  \frac{1}{n}\int_{S^{n-1}}
	 \|\theta\|_{\ce^\circ}^{-n+i} \, d\theta\\
	 &=\frac{1}{\kappa_n} \widetilde{V}_n(\ce)\widetilde{V}_{n-i}(\ce^\circ).
  \end{align*}
\end{proof}

We will now obtain a representation of dual   volumes as certain moments.

\begin{lemma}\label{MainFormula}
	Let $\ce$ be an ellipsoid centered at the origin with semi-axes $a_1$,..., $a_n$. If $i$ is a  real number  (not necessarily an integer) such that $0<i<n$, then 
	\begin{equation}  \label{FormulaW_i1}
	\widetilde{V}_i(\ce) = \frac{4 \pi^{n/2}}{n \Gamma\left(\frac{n-i}2\right)\Gamma\left(\frac{i}{2}\right) }   \int\limits_0^\infty \frac{u^{i-1}}{\sqrt{(1 + \frac{u^2}{a_1^2})\cdots (1 + \frac{u^2}{a_n^2})}}\, du.
	\end{equation}
	
	If $-2<i<0$, then
	
		\begin{equation}  \label{FormulaW_i2}
	\widetilde{V}_i(\ce) = \frac{4 \pi^{n/2}}{n \Gamma\left(\frac{n-i}2\right)\Gamma\left(\frac{i}{2}\right) }   \int\limits_0^\infty u^{i-1}\left( \frac{1}{\sqrt{(1 + \frac{u^2}{a_1^2})\cdots (1 + \frac{u^2}{a_n^2})}}-1\right) \, du.
	\end{equation}
	
	If $n<i<n+2$, then
	\begin{equation}  \label{FormulaW_i3}	\widetilde{V}_i(\ce) = \frac{4 \pi^{n/2}}{n \Gamma\left(\frac{n-i}2\right)\Gamma\left(\frac{i}{2}\right) }   \int\limits_0^\infty u^{i-1} \left(\frac{1}{\sqrt{(1 + \frac{u^2}{a_1^2})\cdots (1 + \frac{u^2}{a_n^2})}}-\frac{a_1\cdots a_n}{u^n}\right) \, du  .
	\end{equation}
	
\end{lemma}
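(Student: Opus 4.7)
The plan is to prove formula (\ref{FormulaW_i1}) directly via a Gaussian-integral trick, and then to deduce (\ref{FormulaW_i2}) and (\ref{FormulaW_i3}) by analytic continuation in the parameter $i$.

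\textbf{Step 1 (the case $0<i<n$).} I evaluate the auxiliary integral
$$I = \int_{\R^n} \|x\|_{\ce}^{-i}\, e^{-|x|^2}\,dx$$
in two ways. First, passing to polar coordinates,
$$I = \int_{S^{n-1}}\|\theta\|_{\ce}^{-i}\,d\theta\cdot\int_0^\infty r^{n-i-1}e^{-r^2}\,dr = \frac{n\,\Gamma((n-i)/2)}{2}\,\widetilde V_i(\ce).$$
Second, using the Gamma-function identity $\|x\|_{\ce}^{-i}=\frac{2}{\Gamma(i/2)}\int_0^\infty u^{i-1}e^{-u^2\|x\|_{\ce}^2}\,du$ and swapping the order of integration, the inner $\R^n$-integral becomes an anisotropic Gaussian,
$$\int_{\R^n}e^{-\sum_j(1+u^2/a_j^2)x_j^2}\,dx=\frac{\pi^{n/2}}{\sqrt{\prod_j(1+u^2/a_j^2)}},$$
giving $I=\frac{2\pi^{n/2}}{\Gamma(i/2)}\int_0^\infty u^{i-1}f(u)\,du$, where $f(u):=\prod_j(1+u^2/a_j^2)^{-1/2}$. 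Equating the two expressions for $I$ yields (\ref{FormulaW_i1}).

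\textbf{Step 2 (analytic continuation).} Viewed as a function of $i\in\mathbb{C}$, the left-hand side $\widetilde V_i(\ce)=\frac{1}{n}\int_{S^{n-1}}\|\theta\|_{\ce}^{-i}\,d\theta$ is entire, because $\|\theta\|_{\ce}$ is bounded above and below on the compact sphere $S^{n-1}$. From the expansions $f(u)=1+O(u^2)$ as $u\to 0$ and $f(u)=a_1\cdots a_n\,u^{-n}+O(u^{-n-2})$ as $u\to\infty$, splitting each of the three integrals in the lemma over $[0,1]\cup[1,\infty)$ and using $\int_0^1 u^{i-1}\,du=1/i$ and $\int_1^\infty u^{i-1-n}\,du=1/(n-i)$ (together with their analytic continuations) shows that all three integrals represent a single meromorphic function $\Phi(i)$ on the strip $-2<\Re(i)<n+2$, with only simple poles at $i=0$ and $i=n$. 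These poles are exactly cancelled by the simple zeros of $1/(\Gamma(i/2)\Gamma((n-i)/2))$, so $\frac{4\pi^{n/2}}{n\Gamma((n-i)/2)\Gamma(i/2)}\Phi(i)$ is analytic throughout the strip. Since it agrees with $\widetilde V_i(\ce)$ on $(0,n)$ by Step 1, the identity theorem delivers (\ref{FormulaW_i2}) and (\ref{FormulaW_i3}) on the intervals $(-2,0)$ and $(n,n+2)$ respectively.

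\textbf{Main obstacle.} The creative step is identifying the auxiliary integral $I$ in Step 1 so that both the polar and Gaussian evaluations are clean; everything else there is a routine Gamma-function computation. The remaining work, the bookkeeping in Step 2 that matches each subtracted term with the corresponding pole of $\Phi$ and with the vanishing of $1/(\Gamma(i/2)\Gamma((n-i)/2))$, is exactly the fractional-derivative formalism recalled in the Preliminaries and poses no substantive difficulty.
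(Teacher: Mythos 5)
Your Step 1 is identical to the paper's: the same Gaussian representation of the spherical integral, the same Gamma-function identity for $\|x\|_{\ce}^{-i}$, and the same factorization of the resulting anisotropic Gaussian. Step 2, however, takes a genuinely different route. The paper proves formula \eqref{FormulaW_i2} by recognizing $\widetilde V_i(\ce)$ (after a change of variable $u\mapsto u^2$) as a multiple of the fractional derivative $f_{\ce}^{(-i/2)}(0)$ and quoting the regularized formula for fractional derivatives of order in $(0,1)$; it then obtains \eqref{FormulaW_i3} not by further continuation but by the polar-duality identity $\widetilde V_i(\ce)=\frac{1}{\kappa_n}\widetilde V_n(\ce)\widetilde V_{n-i}(\ce^\circ)$ from the preceding lemma, applied with $n-i$ in place of $i$ and $\ce^\circ$ in place of $\ce$. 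You instead treat both continuations uniformly: you verify directly that the three moment integrals define one meromorphic function $\Phi$ on $-2<\Re(i)<n+2$ with only simple poles at $i=0$ and $i=n$, observe that these are killed by the simple zeros of $1/\bigl(\Gamma(i/2)\Gamma((n-i)/2)\bigr)$, and invoke the identity theorem. The computations you sketch (splitting at $u=1$, using $f(u)=1+O(u^2)$ near $0$ and $f(u)=a_1\cdots a_n u^{-n}+O(u^{-n-2})$ near $\infty$) do check out and show $\Phi_1$, $\Phi_2$, $\Phi_3$ are continuations of one another with the claimed residues $1$ and $-a_1\cdots a_n$. Your version is more self-contained and symmetric (it avoids the detour through the duality formula for $i>n$), at the cost of doing the pole bookkeeping explicitly rather than leaning on the fractional-derivative formalism recalled in the Preliminaries; the paper's version makes the structural link to dual volumes of polars more visible, which is then reused in later sections.
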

\begin{proof} Applying an orthogonal transformation if needed, we can assume that the Minkowski functional of $\ce$ is given by
	$$\|x\|_{\ce} = \left(\frac{x_1^2}{a_1^2} + \cdots + \frac{x_n^2}{a_n^2}\right)^{1/2}, \qquad x \in \mathbb R^n.$$
Recall that for all $i$ we have 
		\begin{equation*}\label{int}
		\widetilde{V}_i(\ce) = \frac{1}{n}\int_{S^{n-1}} \|\theta\|_{\ce}^{-i} \, d\theta.
		\end{equation*}
		
			First we will consider the case $i\in (0,n)$. Observe that
	\begin{equation}\label{repr}
	\int_{S^{n-1}} \|\theta\|_{\ce}^{-i} d\theta = \frac{2}{ \Gamma\left(\frac{n-i}2\right) } \int_{\R^n} \|x\|_{\ce}^{-i} e^{-|x|^2} \, dx.
	\end{equation}
	To check this, pass to the polar coordinates in the latter integral:
	$$ \int_{\R^n}  \|x\|_{\ce}^{-i} e^{-|x|^2} \, dx =  \int_{S^{n-1}}  \|\theta\|_{\ce}^{-i} \, d\theta \int_0^\infty r^{n-i-1} e^{-r^2} \, dr = \frac12 \Gamma\left(\frac{n-i}2\right)\int_{S^{n-1}} \|\theta\|_{\ce}^{-i}\, d\theta.$$
	
	Thus, we have 
	$$\widetilde{V}_i(\ce) = \frac{2}{n \Gamma\left(\frac{n-i}2\right) } \int_{\R^n}   \|x\|_{\ce}^{-i}   e^{-|x|^2} \, dx.
	$$

Now observe that, for $i>0$, we have
$$\|x\|_{\ce}^{-i}  = \frac{2}{\Gamma\left(\frac{i}{2}\right)} 	\int\limits_0^\infty s^{i-1}  e^{- \|x\|_{\ce}^{2}\, s^2}\, ds.$$
This can be checked by a   change of the variable in the latter integral.
	
	Therefore, 
	\begin{align*}
	\widetilde{V}_i(\ce)& = \frac{4}{n \Gamma\left(\frac{n-i}2\right)\Gamma\left(\frac{i}{2}\right) } 	\int\limits_0^\infty  s^{i-1} \int_{\R^n}  e^{-\left(\frac{x_1^2}{a_1^2} + \cdots + \frac{x_n^2}{a_n^2}\right)\, s^2}   e^{-|x|^2} \, dx\, ds \\
	& = \frac{4}{n \Gamma\left(\frac{n-i}2\right)\Gamma\left(\frac{i}{2}\right) }  	\int\limits_0^\infty s^{i-1} \int\limits_{\R}  e^{-x_1^2 \left(1 + \frac{s^{2}}{a_1^2} \right) }  dx_1 \cdots\int\limits_{\R} e^{-x_n^2 \left(1 + \frac{s^{2}}{a_n^2 } \right)}  dx_n \,  ds \\
		& = \frac{4}{n \Gamma\left(\frac{n-i}2\right)\Gamma\left(\frac{i}{2}\right) }  	\int\limits_0^\infty s^{i-1} \frac{\sqrt{\pi}}{\sqrt{1+\frac{s^2}{a_1^2} }}    \cdots \frac{\sqrt{\pi}}{\sqrt{1 + \frac{s^2}{a_n^2 }}} \, ds ,
    \end{align*} 
    as claimed. 
    
   To treat the case $i<0$, we will use the analytic continuation technique.
   Using the formula for $\widetilde{V}_i(\ce)$, $0<i<n$, proved above and making a change of variables, we get
   $$	\widetilde{V}_i(\ce) = \frac{2 \pi^{n/2}}{n \Gamma\left(\frac{n-i}2\right)\Gamma\left(\frac{i}{2}\right) }   \int\limits_0^\infty \frac{u^{i/2-1}}{\sqrt{(1 + \frac{u }{a_1^2})\cdots (1 + \frac{u }{a_n^2})}}\, du.$$
    Denoting
    $$f_{\ce}(u) =  \frac{1}{\sqrt{(1 + \frac{u}{a_1^2})\cdots (1 + \frac{u}{a_n^2})}},$$
    we see that 
  \begin{equation}\label{frdir}\widetilde{V}_i(\ce) = \frac{2 \pi^{n/2}}{n \Gamma\left(\frac{n-i}2\right)  }  f_{\ce}^{(-i/2)}(0), 
  \end{equation}
    where $f_{\ce}^{(-i/2)}(0)$ is the fractional derivative of $f$ of order $-i/2$ at zero.
    
    The left hand-side of \eqref{frdir} is an analytic function of $i\in \mathbb C$ and the right-hand side is analytic for in $\{i\in \mathbb C : \Re (i) < n\}$. Since they coincide on the interval $i\in (0,n)$, they coincide for all $i<n$. Thus formula \eqref{frdir} is valid for all $i<n$.
    In particular, for $-2<i<0$, we have 
    $$	\widetilde{V}_i(\ce) = \frac{2 \pi^{n/2}}{n \Gamma\left(\frac{n-i}2\right)\Gamma\left(\frac{i}{2}\right) }   \int\limits_0^\infty u^{i/2-1} \left( \frac{1}{\sqrt{(1 + \frac{u }{a_1^2})\cdots (1 + \frac{u }{a_n^2})}}-1\right) \, du.$$
    Replacing $u$ by $u^2$, we get the formula given in the statement of the lemma.

 To deal with the case when $i>n$, 
 we will first use \eqref{relation_dual} and then apply  \eqref{frdir} with  $n-i$ instead of $i$ and with $\ce^\circ$ instead of $\ce$:
 
 $$\widetilde{V}_i(\ce) = a_1\cdots a_n \widetilde{V}_{n-i}(\ce^\circ) = a_1\cdots a_n \frac{2 \pi^{n/2}}{n \Gamma\left(\frac{i}2\right)  }  f_{\ce^\circ}^{((i-n)/2)}(0), $$
 where 
 $$f_{\ce^\circ}(u) =  \frac{1}{\sqrt{(1 +  {u}{a_1^2})\cdots (1 +  {u}{a_n^2})}}.$$

    In particular, for $n<i<n+2$, we have 
  \begin{align*}	\widetilde{V}_i(\ce) &= \frac{2 \pi^{n/2}a_1\cdots a_n}{n \Gamma\left(\frac{n-i}2\right)\Gamma\left(\frac{i}{2}\right) }   \int\limits_0^\infty u^{(n-i)/2-1} \left( \frac{1}{\sqrt{(1 +  {u }{a_1^2})\cdots (1 +  {u }{a_n^2})}}-1\right) \, du\\
   &= \frac{4 \pi^{n/2}}{n \Gamma\left(\frac{n-i}2\right)\Gamma\left(\frac{i}{2}\right) }   \int\limits_0^\infty u^{i-1} \left(\frac{1}{\sqrt{(1 + \frac{u^2}{a_1^2})\cdots (1 + \frac{u^2}{a_n^2})}}-\frac{a_1\cdots a_n}{u^n}\right) \, du  .
  \end{align*}

\end{proof}

Now we are ready to prove our main result. 

{\bf Proof of Theorem \ref{thm:main1}.} Let $\ce_1$ be an ellipsoid with semi-axes $a_1, \ldots, a_n$, and $\ce_2$   an ellipsoid with semi-axes $b_1, \ldots, b_n$. Assume that 
$\widetilde V_i(\ce_1)=\widetilde V_i(\ce_2)$, for all $i=1,\ldots,n$.

Note that  $\widetilde  V_n(\ce_1)=\widetilde V_n(\ce_2)$ is the equality  of the volumes of  $\ce_1$ and $\ce_2$, which  gives $a_1\cdots a_n= b_1\cdots b_n$.

By Lemma \ref{MainFormula}, from the equalities of the dual volumes of orders from $1, \dots, n-1$ we get
$$	  \int\limits_0^\infty  \frac{u^{i-1}}{\sqrt{(1 + \frac{u^2}{a_1^2})\cdots (1 + \frac{u^2}{a_n^2})}}\, du =   \int\limits_0^\infty  \frac{u^{i-1}}{\sqrt{(1 +   \frac{u^2}{b_1^2})\cdots (1 + \frac{u^2}{b_n^2})}}  \, du ,$$
for all  $i=1,\ldots,n-1$. 

Denoting for brevity 
\begin{equation}\label{F}
F(u) =   \frac{1}{\sqrt{(1 + \frac{u^2}{a_1^2})\cdots (1 + \frac{u^2}{a_n^2})}} - \frac{1}{\sqrt{(1 +   \frac{u^2}{b_1^2})\cdots (1 + \frac{u^2}{b_n^2})}},
\end{equation}
we get 
$$	  \int\limits_0^\infty  {u^{i-1}} F(u)\, du =0,$$
for all  $i=1,\ldots,n-1$. 
This means that for any polynomial $P$ of degree at most $n-2$ we have 
$$	  \int\limits_0^\infty P(u) F(u) \, du=0.$$

Now observe that the function $F$ 
has at most $n-2$ positive real roots, unless it is identically equal to zero. Indeed, solving $F(u)=0$ is equivalent to solving
 $$ \left(1 + \frac{u^2}{a_1^2}\right)\cdots \left(1 + \frac{u^2}{a_n^2}\right)=\left(1 +   \frac{u^2}{b_1^2}\right)\cdots \left(1 + \frac{u^2}{b_n^2}\right) ,$$
 which can be written in the form
 $$
 1+u^2 q(u^2) +\frac{u^{2n}}{a_1^2\cdots a_n^2} = 1 + u^2 r(u^2) + \frac{u^{2n}}{b_1^2\cdots b_n^2},  
 $$
 where $q$ and $r$ are polynomials of degree $n-2$. 
 
 Since   $\ce_1$ and $\ce_2$ have equal volumes, the latter equation reduces to $q(u^2) = r(u^2)$, which has at most $n-2$ positive real roots. 
 
 Now choose such a non-zero polynomial $P$ that changes sign exactly at those positive numbers where $F$ changes its sign. This guarantees that the product $PF$ is either everywhere non-negative or non-positive. Since the integral of $PF$ is zero, we conclude that $F$ is identically zero. This means that the sets of numbers $\{a_1, \ldots, a_n\}$ and  $\{b_1, \ldots, b_n\}$ coincide. The theorem is proved.
  
\qed

We will now explain how to adjust the proof above to the case when dual volumes of other orders are given. Let us take any non-zero numbers $i_k$, $k=1,\dots,n$ from the interval $(-2,n]$. Using either formula \eqref{FormulaW_i1} or \eqref{FormulaW_i2}, the equality  $\widetilde  V_{i_k}(\ce_1)=\widetilde V_{i_k}(\ce_2)$ gives 
\begin{equation}\label{eqns}
  \int\limits_0^\infty  {u^{i_k-1}} F(u)\, du =0,
 \end{equation}
if $i_k\ne n$. Here, $F$ is the same function as in \eqref{F}.

If some $i_k$ is equal to $n$ then $F$ has at most $n-2$ positive real roots, and we proceed as above with the help of the following lemma.

\begin{lemma}[Lemma 19, \cite{ENT}] \label{lem:moments}
	Let $f:[0,\infty)\to \mathbb R$ be a continuous function that changes sign at most $N-1$ times in the interval   $(0, \infty)$. If there exist $N$ real numbers $p_1$,..., $p_N$ such that 
	\begin{align*}
  \int\limits_0^{\infty} t^{p_k} f(t) dt =0,\quad \textrm{for every} \quad k=1, \ldots, N,
	\end{align*}
	 then $f$ is identically equal to zero.
\end{lemma}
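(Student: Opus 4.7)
The plan is a contradiction argument. Suppose $f\not\equiv 0$, and let $0<t_1<\dots<t_m$, with $m\le N-1$, be the points in $(0,\infty)$ at which $f$ changes sign. I would seek a non-trivial generalized polynomial
\begin{equation*}
g(t)=\sum_{k=1}^N c_k\,t^{p_k}
\end{equation*}
whose sign on $(0,\infty)$ agrees pointwise with that of $f$. Then $gf\ge 0$ is continuous on $(0,\infty)$ and
\begin{equation*}
\int_0^\infty g(t)f(t)\,dt=\sum_{k=1}^N c_k\int_0^\infty t^{p_k}f(t)\,dt=0,
\end{equation*}
which forces $gf\equiv 0$; since $g$ has only isolated zeros, this yields $f\equiv 0$, a contradiction.

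The key ingredient is that $\{t^{p_1},\dots,t^{p_N}\}$, with the $p_k$ distinct, forms an extended Chebyshev (ET) system on $(0,\infty)$: every non-trivial linear combination has at most $N-1$ zeros in $(0,\infty)$ counted with multiplicity. I would verify this via the substitution $t=e^u$, which reduces the claim to the corresponding statement for exponential sums $\sum c_k e^{p_k u}$ on $\mathbb{R}$; an induction on $N$ using Rolle's theorem (divide out the slowest exponential and differentiate, reducing the number of terms by one) then settles it.

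Granted the ET-property, $g$ would be constructed by prescribing a zero divisor of total multiplicity $N-1$ on $(0,\infty)$: the $m$ simple zeros at $t_1,\dots,t_m$ supplemented, when $N-1-m$ is even, by $(N-1-m)/2$ auxiliary double zeros at arbitrary points distinct from the $t_i$'s. This amounts to $N-1$ linear conditions on $N$ coefficients, hence a one-dimensional solution subspace; the ET-bound forces any non-zero $g$ in this subspace to have exactly the prescribed zeros, so it changes sign only at the simple zeros $t_1,\dots,t_m$, after which a global sign normalisation aligns $g$ with $f$. When $N-1-m$ is odd the same scheme is used together with one additional condition of boundary type, fixing the asymptotic sign of $g$ at $0$ or at $\infty$ compatibly with that of $f$ on the extreme interval.

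The main obstacle is precisely this construction: one must rule out spurious sign changes in $g$ and correctly handle the parity of $N-1-m$ together with the asymptotic behaviour of $f$. The ET-property is exactly what rules out extra zeros, while the parity bookkeeping is the standard delicate step in such arguments; for completeness I would cite the relevant result from the theory of Chebyshev and Markov systems (as developed, for instance, in Karlin--Studden) rather than reprove it.
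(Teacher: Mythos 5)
Note first that the paper does not prove this lemma; it states it verbatim as Lemma~19 of \cite{ENT} and uses it as a black box. (The proof of Theorem~\ref{thm:main1} uses only the integer-exponent case, where one can take $P$ to be an ordinary polynomial, and the general statement is invoked for the non-integer variants.) So there is no ``paper's own proof'' to compare against; I will assess your argument on its own terms.

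Your overall strategy is the right one and is the standard route: produce a nonzero $g$ in $\mathrm{span}\{t^{p_1},\dots,t^{p_N}\}$ whose sign agrees with that of $f$, so that $gf$ has constant sign, and then use $\int_0^\infty gf\,dt=\sum_k c_k\int_0^\infty t^{p_k}f\,dt=0$ to force $gf\equiv 0$ and hence $f\equiv 0$. The reduction $t=e^u$ to exponential sums and the Rolle induction establishing the (extended) Chebyshev property are also correct.

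The genuine gap is in the construction of $g$. Prescribing $N-1$ zeros among all $N$ powers and then patching the parity of $N-1-m$ with ``a boundary-type condition'' is not justified as written: a one-dimensional solution space with only $N-2$ interior zero conditions plus a sign condition at $0$ or $\infty$ does not, by the Chebyshev bound alone, exclude one additional \emph{sign-changing} zero in $(0,\infty)$. Making this rigorous requires the boundary-interpolation theory for ET-systems (Karlin--Studden), which you acknowledge but do not supply; as it stands the odd-parity case is not proved.

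There is a much cleaner construction that avoids double zeros and parity bookkeeping entirely: use only $m+1\le N$ of the exponents. With $t_1<\dots<t_m$ the sign-change points of $f$, set
\begin{equation*}
g(t)=\det\begin{pmatrix}
t^{p_1} & t^{p_2} & \cdots & t^{p_{m+1}}\\
t_1^{p_1} & t_1^{p_2} & \cdots & t_1^{p_{m+1}}\\
\vdots & & & \vdots\\
t_m^{p_1} & t_m^{p_2} & \cdots & t_m^{p_{m+1}}
\end{pmatrix}.
\end{equation*}
Expanding along the first row shows $g$ is a nontrivial linear combination of $t^{p_1},\dots,t^{p_{m+1}}$ (the coefficient of each $t^{p_j}$ is, up to sign, a positive generalized Vandermonde minor), and $g(t_i)=0$ for $i=1,\dots,m$. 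By the Chebyshev bound for $m+1$ powers, $g$ has at most $m$ zeros in $(0,\infty)$, so these are precisely $t_1,\dots,t_m$, each simple; hence $g$ changes sign exactly at the same points as $f$, and after replacing $g$ by $-g$ if necessary, $gf\ge 0$. Since $\int_0^\infty t^{p_j}f\,dt=0$ for $j=1,\dots,m+1$, we get $\int_0^\infty gf\,dt=0$, so $gf\equiv 0$, and since $g$ vanishes only at finitely many points, $f\equiv 0$. I would replace your parity argument with this; the idea you had is sound, but this version closes the gap.
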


If neither of $i_k$ is equal to $n$, then we can only conclude that $F$ has at most $n-1$ positive real roots, but we have $n$ conditions of the form \eqref{eqns}, so we can still use the above lemma.

To show how to extend Theorem \ref{thm:main1} to a larger set of indices, let us first give a few formulas that follow from the proof of Lemma \ref{MainFormula}.
For $-4<i<-2$ formula \eqref{frdir} yields \pagebreak
\begin{multline*}	\widetilde{V}_i(\ce) = \frac{4 \pi^{n/2}}{n \Gamma\left(\frac{n-i}2\right)\Gamma\left(\frac{i}{2}\right) }   \\  
	\times  \int\limits_0^\infty u^{i -1} \left( \frac{1}{\sqrt{(1 + \frac{u^2 }{a_1^2})\cdots (1 + \frac{u^2}{a_n^2})}}-1+\frac12\left(\frac{1 }{a_1^2}+\cdots+\frac{1 }{a_n^2}\right)u^2 \right) \, du. 
	\end{multline*}  
When $i=-2$, we have 
$$	\widetilde{V}_{-2}(\ce) = \frac{ \pi^{n/2}}{n \Gamma\left(\frac{n+2}2\right)  }  \left(\frac{1 }{a_1^2}+\cdots+\frac{1 }{a_n^2}\right) .$$

For $n+2<i<n+4$ we get
\begin{multline*}	\widetilde{V}_i(\ce) = \frac{4 \pi^{n/2}}{n \Gamma\left(\frac{n-i}2\right)\Gamma\left(\frac{i}{2}\right) }  \\ \times \int\limits_0^\infty u^{i-1} \left( \frac{1}{\sqrt{(1 +  \frac{u^2 }{a_1^2})\cdots (1 +  \frac{u^2}{a_n^2})}}-\frac{a_1\cdots a_n}{u^n}+\frac{a_1\cdots a_n\left( {a_1^2}+\cdots+ {a_n^2}\right)}{2u^{n+2}}\right) \, du. 
 \end{multline*}
When $i=n+2$, we have 
$$	\widetilde{V}_{n+2}(\ce) = \frac{ \pi^{n/2}  a_1\cdots a_n}{n \Gamma\left(\frac{n+2}2\right)  }  \left( {a_1^2}+\cdots+ {a_n^2}\right) .$$

Now observe that in some cases we can take the interval $(-2,n+2)$ instead of the interval $(-2,n]$. If we assume that $\widetilde  V_n(\ce_1)=\widetilde V_n(\ce_2)$ and $\widetilde  V_{i_k}(\ce_1)=\widetilde V_{i_k}(\ce_2)$ for some distinct numbers $i_k\in (-2,n+2)\setminus\{0,n\}$, $k=1,\ldots,n-1$, then along with the formulas \eqref{FormulaW_i1} and \eqref{FormulaW_i2}, we can also use \eqref{FormulaW_i3}, since $\widetilde  V_i(\ce_1)-\widetilde V_i(\ce_2)=0$ with $n<i<n+2$ gives 
$$	  \int\limits_0^\infty  {u^{i-1}} F(u)\, du =0,$$ with the same function $F$ as used above. 

Using this argument one can proceed even further. For example, if 	$\widetilde{V}_{-2}(\ce_1) = \widetilde{V}_{-2}(\ce_2)$ and $\widetilde{V}_{n}(\ce_1) = \widetilde{V}_{n}(\ce_2)$, then we can take the remaining $n-2$ indices from the interval $(-4,n+2)$. We will have $n-2$ conditions of the form \eqref{eqns}, while the function $F$ will have at most positive $n-3$ roots. To explain the latter, observe that the equation
$$ \left(1 + \frac{u^2}{a_1^2}\right)\cdots \left(1 + \frac{u^2}{a_n^2}\right)=\left(1 +   \frac{u^2}{b_1^2}\right)\cdots \left(1 + \frac{u^2}{b_n^2}\right) $$
  can be written in the form
\begin{multline*}
1+u^2\left( \frac{1}{a_1^2}+\cdots  + \frac{1}{a_n^2}\right)+ u^4 q(u^2) +\frac{u^{2n}}{a_1^2\cdots a_n^2} \\= 1 + u^2\left( \frac{1}{b_1^2}  +\cdots+ \frac{1}{b_n^2}\right)+u^4 r(u^2) + \frac{u^{2n}}{b_1^2\cdots b_n^2},  
\end{multline*}
where $q$ and $r$ are polynomials of degree $n-3$. 

Using similar ideas, one can get uniqueness results involving $\widetilde{V}_{-4}$, $\widetilde{V}_{n+2}$, and so forth.

\section{Intrinsic volumes of ellipsoids in $\mathbb R^3$}

In this section we will give an alternative proof of the following result of Petrov and Tarasov \cite{PT}.

\begin{theorem}
	Let    $\ce_1$ and $\ce_2$	be two ellipsoids in $\R^3$ such that $  V_1(\ce_1)= V_1(\ce_2)$, $  V_2(\ce_1)= V_2(\ce_2)$,  $  V_3(\ce_1)= V_3(\ce_2)$. Then $\ce_1$ and $\ce_2$ are congruent.
\end{theorem}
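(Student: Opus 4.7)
The plan is to imitate the moment-problem strategy of Theorem \ref{thm:main1}, reducing the three equalities $V_i(\ce_1)=V_i(\ce_2)$, $i=1,2,3$, to two one-variable moment conditions on the same difference-of-radicals function $G$ that appeared there. Let $\ce_1$ have semi-axes $a_1,a_2,a_3$ and $\ce_2$ have semi-axes $b_1,b_2,b_3$ (along the coordinate axes, after applying an orthogonal transformation). Formula \eqref{1stIV} yields
\[
V_1(\ce)=\frac{1}{\pi}\int_{S^2}\sqrt{a_1^2\theta_1^2+a_2^2\theta_2^2+a_3^2\theta_3^2}\,d\theta,
\]
and either a direct parametric computation of the surface area, or an application of \eqref{KZ} to $\ce^\circ$, gives
\[
V_2(\ce)=\frac{a_1a_2a_3}{2}\int_{S^2}\sqrt{\frac{\theta_1^2}{a_1^2}+\frac{\theta_2^2}{a_2^2}+\frac{\theta_3^2}{a_3^2}}\,d\theta,
\]
while $V_3(\ce)=\frac{4\pi}{3}a_1a_2a_3$.

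Under the hypothesis $V_i(\ce_1)=V_i(\ce_2)$ for $i=1,2,3$, the $V_3$-identity gives $a_1a_2a_3=b_1b_2b_3=:V$. Applying the Gaussian/Laplace-transform argument of Lemma \ref{MainFormula} (concretely, \eqref{FormulaW_i2} with $i=-1$) to the two spherical integrals above, the equation $V_1(\ce_1)=V_1(\ce_2)$ translates into
\[
\int_0^\infty u^{-2}\,G(u)\,du=0,
\]
where
\[
G(u):=\frac{1}{\sqrt{\prod_{j=1}^3(1+u^2a_j^2)}}-\frac{1}{\sqrt{\prod_{j=1}^3(1+u^2b_j^2)}}.
\]
The $V_2$-equation produces a moment of the same kind but with $1/a_j^2$ and $1/b_j^2$ in place of $a_j^2$ and $b_j^2$; the change of variables $u\mapsto 1/u$, together with $a_1a_2a_3=b_1b_2b_3$, converts it into a moment of the very same $G$:
\[
\int_0^\infty u^3\,G(u)\,du=0.
\]

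The endgame now copies the last step of the proof of Theorem \ref{thm:main1}. Because $V$ is common, the $u^0$ and $u^6$ coefficients of $\prod_{j=1}^3(1+u^2a_j^2)-\prod_{j=1}^3(1+u^2b_j^2)$ both vanish, so $G(u)=0$ reduces to $u^2(c_1+c_2u^2)=0$, which has at most one positive real root; hence $G$ changes sign at most once on $(0,\infty)$. Applying Lemma \ref{lem:moments} with the two powers $-2$, $3$ and with $N=2$ forces $G\equiv 0$, so $\prod_{j=1}^3(1+u^2a_j^2)\equiv\prod_{j=1}^3(1+u^2b_j^2)$ as polynomials, giving $\{a_1,a_2,a_3\}=\{b_1,b_2,b_3\}$ and the congruence of $\ce_1$ and $\ce_2$. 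The main subtlety I anticipate is the $V_2$-step: one must recognize that the change of variables $u\mapsto 1/u$, backed by the common-volume hypothesis, converts a moment of the reciprocal-semi-axes integrand into a moment of the \emph{same} function $G$---this is precisely what allows the $N=2$ case of the moment lemma to close the argument.
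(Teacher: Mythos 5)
Your proof is correct and follows essentially the same strategy as the paper's: express $V_1$, $V_2$, $V_3$ through dual volumes (the paper uses $\widetilde V_4$, $\widetilde V_{-1}$, $\widetilde V_3$; your $u\mapsto 1/u$ substitution, backed by the equal-volume hypothesis, is the same manoeuvre as the paper's use of formula \eqref{relation_dual}), then invoke Lemma \ref{MainFormula} to turn the hypotheses into the two moment conditions $\int_0^\infty u^{-2}G=0$ and $\int_0^\infty u^{3}G=0$, and finally apply Lemma \ref{lem:moments} together with the observation that $G$ changes sign at most once. The only cosmetic difference is that you unwind the Section~\ref{SectionMR} argument explicitly instead of citing it, and you build the difference function from $a_j^2$ rather than from $1/a_j^2$, which is just the polar normalization.
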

\begin{proof}
Let  $\ce$  be an ellipsoid in $\mathbb R^3$. Using formulas \eqref{1stIV}, \eqref{tildeVi}, and \eqref{relation_dual}, we can write its first intrinsic volume as follows:
$$
V_1(\ce ) = \frac{1}{\pi} \int \limits_{S^{2}} h_{\ce }(\theta) d\theta = \frac{1}{\pi} \int \limits_{S^{2}} \|\theta\|_{\ce ^\circ}  d\theta = \frac{3}{\pi}  \widetilde V_{-1}(\ce ^\circ)  = \frac{4}{V_3(\ce)} \widetilde V_{4}(\ce) .
$$
To find the second intrinsic volume of $\ce$, we will use formulas \eqref{KZ}, \eqref{1stIV}, and \eqref{tildeVi} .
\begin{align*}
V_2(\ce)& = \frac{3}{8} V_3(\ce) \cdot V_1(\ce^\circ) =    \frac{3}{8\pi}V_3(\ce) \int_{S^2} h_{\ce^\circ} (\theta) \, d\theta\\
&=  \frac{3}{8\pi}V_3(\ce) \int_{S^2} \|\theta\|_{\ce}   \, d\theta=  \frac{9}{8\pi} V_3(\ce)  \widetilde V_{-1}(\ce). 
\end{align*}

Thus the knowledge of $V_1$, $V_2$, $V_3$ is equivalent to the knowledge of $ \widetilde V_{-1}$, $ \widetilde V_4$, $ \widetilde V_3$. The latter information determines the ellipsoid uniquely, as  was shown in the previous section. 
\end{proof} 

\section{Intrinsic volumes of ellipsoids of revolution in $\R^n$}

In this section we will discuss uniqueness of ellipsoids of revolution with given intrinsic volumes. As we will see below, we do not need to know all intrinsic volumes $V_1, \ldots, V_n$, only a few of them.

Let $K$ be an origin-symmetric convex body. $K$ is called a zonoid if $K$ can be approximated in the Hausdorff metric by finite Minkowski sums of segments.  It is known (see \cite[Thm 3.5.3]{Sch}) that $K$ is a zonoid if and only if 
$$
h_{K}(x) = \int \limits_{S^{n-1}} |\langle x , \theta\rangle | d \mu(\theta), \quad x \in \mathbb R^{n}, 
$$
where $\mu$ is a  non-negative even Borel measure on $S^{n-1}$.   The measure $\mu$ is called the generating measure of $K$. 

Let $\ce$ be an ellipsoid in $\mathbb R^n$ centered at the origin. It is easy to see that $\ce$ is a zonoid. Moreover, its generating measure is given by
$$d\mu(\theta) =\frac{\kappa_n}{2 \mathrm{vol}(\ce) \kappa_{n-1}} \|\theta\|_{\ce}^{-n-1} d\theta.$$
Indeed, let $A$ be a  linear transformation such that $\ce = A B_2^n$ . Then
\begin{align*} \int \limits_{S^{n-1}} |\langle x , \theta\rangle |\, \|\theta\|_{\ce}^{-n-1} d\theta & = (n+1) \int \limits_{\ce} |\langle x , y\rangle |\, dy = (n+1) |\det A|  \int \limits_{B_2^n} |\langle x , A y\rangle |\, dy
\\
&= (n+1) |\det A|  \int \limits_{B_2^n} |\langle A^T x ,  y\rangle |\, dy \\
& = (n+1) |\det A| \,|A^T x| \int \limits_{B_2^n} \left|\left\langle \frac{A^T x}{|A^T x|} ,  y\right\rangle \right|\, dy \\
& = (n+1) |\det A| \, |A^T x| C_n,
\end{align*} 
where $$C_n=  \int \limits_{B_2^n} \left|\left\langle u ,  y\right\rangle \right|\, dy = \frac{1}{n+1} \int \limits_{S^{n-1}} \left|\left\langle u ,  \theta \right\rangle \right|\, d\theta = \frac{2 \kappa_{n-1}}{n+1} ,$$
for any $u\in S^{n-1}$; see e.g., \cite[Lemma 3.4.5]{Gr}.

Thus, 
$$\int \limits_{S^{n-1}} |\langle x , \theta\rangle |\, \|\theta\|_{\ce}^{-n-1} d\theta  = 2 \kappa_{n-1} |\det A|\, |A^T x|  = 2 \frac{\kappa_{n-1}}{\kappa_n} \mathrm{vol}(\ce)  h_{A B_2^n}(x)     =  2  \frac{\kappa_{n-1}}{\kappa_n} \mathrm{vol}(\ce) h_{\ce }(x) ,
$$
as claimed.

Recall (see formulas (5.31) and (5.82) in \cite{Sch}) that   the $k$-th intrinsic volume of a zonoid $K$ with generating measure $\mu$ is given by
\begin{equation}\label{fml:zonoids}
V_k(K) = C(n,k) \int \limits_{S^{n-1}} \ldots \int \limits_{S^{n-1}} D_n(v_1, \ldots, v_n)\, d\mu(v_1)  \cdots d \mu(v_k) \, dv_{k+1} \cdots dv_n,
\end{equation}
where $D_n(v_1, \ldots,v_n)$ is the  volume of the parallelepiped spanned by the vectors $v_1, \ldots, v_n$, and $C(n,k)= \displaystyle\frac{2^k {n\choose k}}{n!\, \kappa_{n-k} \kappa_{n-1}^{n-k}}$.

Therefore, for an ellipsoid $\ce = AB_2^n$ we have
\begin{align*} 
V_k(\ce) &= \frac{\kappa^k_n}{2^k \mathrm{vol}(\ce)^k \kappa_{n-1}^k}C(n,k)  \int \limits_{S^{n-1}} \ldots \int \limits_{S^{n-1}} D_n(v_1, \ldots, v_n)\, \|v_1\|_{\ce}^{-n-1} d v_1   \cdots \|v_k\|_{\ce}^{-n-1} d  v_k  \, dv_{k+1} \cdots dv_n\\
 &= \frac{C(n,k)(n+1)^n \kappa^k_n}{2^k \mathrm{vol}(\ce)^k \kappa_{n-1}^k}  \underbrace{\int \limits_{\ce} \ldots \int \limits_{\ce} }_{k} \underbrace{\int \limits_{B_2^n} \ldots \int \limits_{B_2^n}}_{n-k} D_n(v_1, \ldots, v_n)\,   d v_1   \cdots dv_n\\
  &=\frac{C(n,k)(n+1)^n }{2^k  \kappa_{n-1}^k}  \int \limits_{B_2^n} \ldots \int \limits_{B_2^n}  D_n(A v_1, \ldots, A v_k,   v_{k+1}, \ldots  v_n)\,   d v_1   \cdots dv_n\\
   &= \frac{C(n,k) }{2^k  \kappa_{n-1}^k}  \int \limits_{S^{n-1}} \ldots \int \limits_{S^{n-1}}  D_n(A v_1, \ldots, A v_k,  v_{k+1}, \ldots  v_n)\,   dv_1   \cdots dv_n.
\end{align*}

Now assume that $\ce$ is an ellipsoid of revolution of the form
$$\ce=\left\{x\in \mathbb R^n: \frac{x_1^2}{a^2} + \cdots \frac{x_{n-1}^2}{a^2}+ \frac{x_n^2}{b^2}\le 1\right\}.$$
The corresponding matrix $A$ is diagonal, with the entries  $a, \ldots, a, b$ on the main diagonal. It will be convenient to use the following notation: $A = \mathrm{diag} \, \{a, \ldots, a, b\}$.
We have
\begin{align*} D_n(A v_1, \ldots, A v_k,   v_{k+1} \ldots  v_n)& = \mathrm{abs} \left| \begin{array}{llllll}  a v_{1}^{1} &  \ldots & a v_{k}^{1} & v_{k+1}^{1}
& \ldots &  v_{n}^{1}\\
 a v_{1}^{2} &  \ldots & a v_{k}^{2} & v_{k+1}^{2}
& \ldots &   v_{n}^{2}\\
\vdots  & \ldots & \vdots  &\vdots 
& \ldots & \vdots \\
 a v_{1}^{n-1}& \ldots & a v_{k}^{n-1} & v_{k+1}^{n-1}
& \ldots &   v_{n}^{n-1}\\
 b v_{1}^{n}& \ldots & b v_{k}^{n} & v_{k+1}^{n}
& \ldots &  v_{n}^{n}
\end{array} \right| \\
& = a^ k \mathrm{abs} \left| \begin{array}{llllll}    v_{1}^{1} &  \ldots &   v_{k}^{1} & v_{k+1}^{1}
& \ldots &  v_{n}^{1}\\
 v_{1}^{2} &  \ldots &  v_{k}^{2} & v_{k+1}^{2}
& \ldots &   v_{n}^{2}\\
\vdots  & \ldots & \vdots  &\vdots 
& \ldots & \vdots \\
  v_{1}^{n-1}& \ldots &   v_{k}^{n-1} & v_{k+1}^{n-1}
& \ldots &   v_{n}^{n-1}\\
\frac{b}{a} v_{1}^{n}& \ldots & \frac{b}{a} v_{k}^{n} & v_{k+1}^{n}
& \ldots &  v_{n}^{n}
\end{array} \right| ,
\end{align*}
where abs stands for the absolute value and $v_i^j$ denotes the $j$-th coordinate of vector $v_i$.

Let $Q=\mathrm{diag}\Big\{  \underbrace{\frac{b}{a},\ldots,\frac{b}{a}}_{k}, \underbrace{1, \ldots,1}_{n-k} \Big\}$. 
Denoting $v^i= (v_1^i, \ldots v_n^i)$, we see that 
$$ D_n(A v_1, \ldots, A v_k,   v_{k+1}, \ldots  v_n) = a^k  D_n( v^1, \ldots,   v^{n-1},   Q  v^n).$$

Therefore, using an integral representation similar to \eqref{repr}, we get
\begin{align*} V_k(\ce)& =  \frac{C(n,k) }{2^k  \kappa_{n-1}^k}  \int \limits_{S^{n-1}} \ldots \int \limits_{S^{n-1}}  D_n(A v_1, \ldots, A v_k,   v_{k+1}, \ldots  v_n)\,   d v_1   \cdots dv_n\\
& = \frac{2^{n-k} C(n,k) }{ \kappa_{n-1}^k \left(\Gamma\left(\frac{n+1}2\right)\right)^n}  \int \limits_{\mathbb R^n} \ldots \int \limits_{\mathbb R^n}  D_n(A v_1, \ldots, A v_k,   v_{k+1}, \ldots  v_n)e^{-|v_1|^2 - \cdots - |v_n|^2}\,   d v_1   \cdots dv_n\\
& = \frac{2^{n-k}   C(n,k) a^k}{ \kappa_{n-1}^k \left(\Gamma\left(\frac{n+1}2\right)\right)^n}  \int \limits_{\mathbb R^n} \ldots \int \limits_{\mathbb R^n}   D_n( v^1, \ldots,   v^{n-1},   Q  v^n) e^{-|v^1|^2 - \cdots - |v^n|^2}\,   d v^1   \cdots dv^n\\
& = \frac{2^{-k} C(n,k)  a^k}{  \kappa_{n-1}^k }  \int \limits_{S^{n-1}} \ldots \int \limits_{S^{n-1}}  D_n( v^1, \ldots,  v^{n-1},   Q  v^n) \,   d v^1   \cdots dv^n\\
& = \displaystyle\frac{{n\choose k} \kappa_{n-1}}{n \, \kappa_{n-k} } a^k\,  V_1(\tilde \ce),
\end{align*} 
where $\tilde \ce = QB_2^n$. 

Thus, by \eqref{1stIV},
$$V_k(\ce) = \bar C(n,k)  a^k\, \int_{S^{n-1}} \left(\frac{b^2}{a^2} (\theta_1^2+\cdots + \theta_k^2)+ \theta_{k+1}^2+\cdots + \theta_n^2 \right)^{1/2} \, d\theta, $$
where $\bar C(n,k) =  \displaystyle \frac{{n\choose k}}{n\kappa_{n-k}}$.

Now suppose that we are given $V_n(\ce)$ and $V_k(\ce)$, for some $k$, $1\le k\le n-1$. It is easy to see that these two intrinsic volumes do not determine an ellipsoid of revolution uniquely. Indeed, after a rescaling, we can assume that $\ce$ has the same volume as the ball $B_2^n$, i.e., $a^{n-1} b=1$. Thus,
\begin{equation}\label{V_kREV}
V_k(\ce) = \bar C(n,k) \int_{S^{n-1}} \Big( a^{2k-2n} (\theta_1^2+\cdots + \theta_k^2)+ a^{2k}(\theta_{k+1}^2+\cdots + \theta_n^2) \Big)^{1/2} \, d\theta. 
\end{equation} 

The latter integral is a continuous function of $a\in (0,\infty)$, and it approaches infinity as $a\to 0$ and $a\to \infty$. This shows that there are two non-congruent ellipsoids of revolution with equal $V_n$ and $V_k$. Thus in order to prove a result in the positive direction, we need more intrinsic volumes. 

\begin{theorem}
Let $\ce_1$ and $\ce_2$ be two ellipsoids of revolution in $\mathbb R^n$ such that $V_n(\ce_1) = V_n(\ce_2)$,  $V_k(\ce_1) = V_k(\ce_2)$, $V_{n-k}(\ce_1) = V_{n-k}(\ce_2)$ for some $k\ne n/2$, $1\le k\le n-1$. Then $\ce_1$ and $\ce_2$ are congruent. 

If $n$ is even, then $V_n$, $V_{n/2}$, $V_k$, for any $k$ different from $n$ and $n/2$, uniquely determine an ellipsoid of revolution (up to an isometry). 
\end{theorem}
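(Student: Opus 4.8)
The plan is to collapse everything to a single real parameter and then to one convexity statement. Since $V_n(\ce_1)=V_n(\ce_2)$ is the equality of volumes, after a common dilation (harmless up to congruence) we may assume both ellipsoids have the volume of $B_2^n$; if the semi-axes of $\ce_j$ are $(a_j,\dots,a_j,b_j)$ this means $a_j^{n-1}b_j=1$, so $\ce_j$ is governed by the single parameter $\rho_j:=b_j/a_j$ (and $a_j=\rho_j^{-1/n}$), and $\ce_1\cong\ce_2$ iff $\rho_1=\rho_2$. Put
\[
\Lambda_m(\rho)=\int_{S^{n-1}}\!\Bigl(\rho^2(\theta_1^2+\cdots+\theta_m^2)+\theta_{m+1}^2+\cdots+\theta_n^2\Bigr)^{1/2}\!d\theta,\qquad f_m(\rho)=\rho^{-m/n}\Lambda_m(\rho).
\]
By \eqref{V_kREV}, for the normalized $\ce$ one has $V_m(\ce)=\bar C(n,m)\,f_m(\rho)$, so each equality $V_m(\ce_1)=V_m(\ce_2)$ becomes $f_m(\rho_1)=f_m(\rho_2)$. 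Hence the first statement asks us to deduce $\rho_1=\rho_2$ from $f_k(\rho_1)=f_k(\rho_2)$ and $f_{n-k}(\rho_1)=f_{n-k}(\rho_2)$, and the second from $f_{n/2}(\rho_1)=f_{n/2}(\rho_2)$ and $f_k(\rho_1)=f_k(\rho_2)$.

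Next I would record two facts about the $f_m$. (i) \emph{Inversion identity.} Passing from $\theta$ to $u=\theta_1^2+\cdots+\theta_m^2$, whose distribution on $S^{n-1}$ is $\mathrm{Beta}(m/2,(n-m)/2)$, gives $\Lambda_m(\rho)=|S^{n-1}|\,\mathbf E\sqrt{\rho^2u+1-u}$ (here and below $\mathbf E$ is expectation in $u$); since $1-u\sim\mathrm{Beta}((n-m)/2,m/2)$ this yields $\Lambda_{n-m}(\rho)=\rho\,\Lambda_m(1/\rho)$, i.e.\ $f_{n-m}(\rho)=f_m(1/\rho)$, and in particular $f_{n/2}$ is invariant under $\rho\mapsto1/\rho$. (ii) \emph{Strict unimodality.} For fixed $\theta$ the map $t\mapsto\bigl(e^{2t}\sum_{i\le m}\theta_i^2+\sum_{i>m}\theta_i^2\bigr)^{1/2}$ is strictly log-convex, since its logarithmic derivative $e^{2t}\sum_{i\le m}\theta_i^2\big/\bigl(e^{2t}\sum_{i\le m}\theta_i^2+\sum_{i>m}\theta_i^2\bigr)$ is strictly increasing; by Hölder's inequality $t\mapsto\Lambda_m(e^t)$ is strictly log-convex, so $p_m(t):=\log f_m(e^t)=-\tfrac mn t+\log\Lambda_m(e^t)$ is strictly convex, and $p_m'(0)=-\tfrac mn+(\log\Lambda_m)'(1)=-\tfrac mn+\mathbf E[u]=0$. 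Thus each $f_m$ is strictly decreasing on $(0,1]$ and strictly increasing on $[1,\infty)$, and attains every value exceeding $f_m(1)$ at exactly one point of $(0,1)$ and one of $(1,\infty)$.

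Using (i) and (ii), both statements reduce to one assertion. For the second statement: $f_{n/2}$ is inversion-invariant and strictly unimodal with minimum at $\rho=1$, so each level set is $\{1\}$ or $\{\rho,1/\rho\}$, whence $f_{n/2}(\rho_1)=f_{n/2}(\rho_2)$ forces $\rho_2\in\{\rho_1,1/\rho_1\}$. For the first statement: $\log\bigl(f_k(e^t)f_{n-k}(e^t)\bigr)=p_k(t)+p_k(-t)$ is a sum of two strictly convex functions, hence strictly convex, and is even, so it is minimized only at $t=0$; since $f_k(\rho)f_{n-k}(\rho)=f_k(\rho)f_k(1/\rho)$ by (i), the equality $f_k(\rho_1)f_{n-k}(\rho_1)=f_k(\rho_2)f_{n-k}(\rho_2)$ again forces $\rho_2\in\{\rho_1,1/\rho_1\}$. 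In either case, if $\rho_2=1/\rho_1$ then $f_k(\rho_1)=f_k(\rho_2)=f_k(1/\rho_1)$, so everything comes down to proving: \emph{if $k\ne n/2$ and $f_k(\rho)=f_k(1/\rho)$, then $\rho=1$}.

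This final step is the main obstacle, and it is here that $k\ne n/2$ is used. Writing $\rho=e^t$ and $\psi_k(t)=\log\Lambda_k(e^t)$, the equation $f_k(\rho)=f_k(1/\rho)$ becomes $\ell(t):=p_k(t)-p_k(-t)=0$, where $\ell$ is odd, $\ell''(t)=\psi_k''(t)-\psi_k''(-t)$ by (i), and one computes $\ell(0)=\ell'(0)=\ell''(0)=0$. The plan is to show $\psi_k''$ is strictly monotone on $\mathbb R$ — strictly increasing if $k<n/2$ and strictly decreasing if $k>n/2$; then $\ell''$ has constant sign on $(0,\infty)$, so $\ell'$ is strictly monotone there and, vanishing at $0$, has constant nonzero sign on $(0,\infty)$, whence so does $\ell$, giving $\ell=0$ only at $t=0$. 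The strict monotonicity of $\psi_k''$ is the genuinely hard point: with $X(t)=(e^{2t}u+1-u)^{1/2}$, $u\sim\mathrm{Beta}(k/2,(n-k)/2)$, one has $\psi_k''=\mathbf E[X]^{-2}\bigl(\mathbf E[X]\mathbf E[X'']-\mathbf E[X']^2\bigr)$ and a cubic analogue for $\psi_k'''$, and the claim amounts to a definite-sign inequality for iterated moments of $u$, the sign being dictated by $\mathbf E[u]-\tfrac12=\tfrac kn-\tfrac12$; I would try to extract it from a correlation inequality for monotone functions of $u$ against the Beta density. For $k=n/2$ the argument necessarily collapses: the law of $u$ is symmetric about $\tfrac12$, the relation $\psi_{n/2}(t)=\psi_{n/2}(-t)+t$ forces $\psi_{n/2}''$ to be even (so not strictly monotone), and indeed $\ell\equiv0$ there — which is precisely why $V_{n/2}$ together with $V_n$ cannot determine an ellipsoid of revolution and a further index $k\ne n/2$ is indispensable. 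Granting the displayed assertion, the reductions above yield $\rho_1=\rho_2$, hence $a_1=a_2$ and $b_1=b_2$, so $\ce_1$ and $\ce_2$ are congruent in both statements.
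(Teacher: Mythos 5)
Your reduction is sound and genuinely different from the paper's as far as it goes: the normalization to a single parameter $\rho=b/a$, the inversion identity $f_{n-m}(\rho)=f_m(1/\rho)$ via the Beta law of $\theta_1^2+\cdots+\theta_m^2$, the strict log-convexity of $t\mapsto\Lambda_m(e^t)$ by H\"older, and the conclusion that the level sets of $f_{n/2}$ and of $f_kf_{n-k}$ have the form $\{\rho,1/\rho\}$ are all correct. But after these reductions the entire content of the theorem is concentrated in the one claim you isolate and do not prove: \emph{if $k\ne n/2$ and $f_k(\rho)=f_k(1/\rho)$, then $\rho=1$} (equivalently: $V_k(\ce)=V_k(\ce^\circ)$ for a volume-normalized ellipsoid of revolution forces $\ce=B_2^n$). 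You explicitly leave this as something you ``would try'' to establish, so the proof is incomplete. Worse, the specific route you propose cannot work: by your own argument $\psi_k''>0$ everywhere, while $\psi_k'(t)\to 0$ as $t\to-\infty$ (there $\Lambda_k(e^t)$ tends to a constant) and $\psi_k'(t)\to 1$ as $t\to+\infty$ (there $\psi_k(t)=t+O(1)$), so $\int_{\mathbb R}\psi_k''=1<\infty$; a positive function with finite integral over $\mathbb R$ cannot be strictly monotone on $\mathbb R$, for any $k$. The weaker statement you actually need --- that $\psi_k''(t)-\psi_k''(-t)$ keeps one sign for $t>0$ --- is not addressed, and the hinted correlation inequality for iterated Beta moments is not supplied.

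For contrast, the paper sidesteps this one-variable analysis entirely. It converts the equalities of $V_k$ and $V_{n-k}$ into two moment conditions $\int_0^\infty u^{-2}F(u)\,du=0$ and $\int_0^\infty u^{n}F(u)\,du=0$ for the two-parameter difference $F$ of the functions $\bigl((1+a^{2k-2n}u^2)^{k}(1+a^{2k}u^2)^{n-k}\bigr)^{-1/2}$, shows by a logarithmic-derivative count that $F$ changes sign at most once, and concludes $F\equiv 0$ from Lemma \ref{lem:moments}; the hypothesis $k\ne n/2$ is exactly what makes the two moment conditions distinct. If you can actually prove your isolated claim (by some method other than monotonicity of $\psi_k''$), your argument would be a complete and attractively elementary alternative; as written it is a correct reduction resting on an unproved key inequality, with a proposed proof strategy that provably fails.
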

\begin{proof}
	Without loss of generality we can assume that the volumes of $\ce_1$ and $\ce_2$ are equal to the volume of $B_2^n$. Thus we will deal with two ellipsoids whose semiaxes are $a,\ldots,a, a^{-n+1}$ and $b,\ldots,b, b^{-n+1}$ correspondingly. Our goal is to show that $a=b$.

	Consider the ellipsoids $\tilde \ce_1 = Q_a B_2^n$ and  $\tilde \ce_2 = Q_b B_2^n$, where 
	$$ Q_a=\mathrm{diag}\,\{\underbrace{a^{-2k+2n},\ldots,a^{-2k+2n}}_{k}, \underbrace{a^{-2k}, \ldots,a^{2k}}_{n-k}\} $$ and $$ Q_b=\mathrm{diag}\,\{\underbrace{b^{-2k+2n},\ldots,b^{-2k+2n}}_{k}, \underbrace{b^{-2k}, \ldots,b^{-2k}}_{n-k}\}. $$
	By formula \eqref{V_kREV}, we see that 
	 $V_k(\ce_1) = V_k(\ce_2)$ is equivalent to 	$\widetilde V_{-1}(\tilde \ce_1) = \widetilde V_{-1}(\tilde \ce_2)$. Replacing $k$ by $n-k$ in \eqref{V_kREV}, we see that $V_{n-k}(\ce_1) = V_{n-k}(\ce_2)$ is equivalent to $\widetilde V_{-1}(\tilde \ce_1^\circ) = \widetilde V_{-1}(\tilde \ce_2^\circ)$, which in turn is equivalent to $\widetilde V_{n+1}(\tilde \ce_1) = \widetilde V_{n+1}(\tilde \ce_2)$, by formula \eqref{relation_dual}. Thus Lemma \ref{MainFormula} gives 
	 $$	  \int\limits_0^\infty  {u^{-2}} F(u)\, du =0  \quad\mbox{and} \quad  \int\limits_0^\infty  {u^{n}} F(u)\, du =0,$$
	 where 
	 $$F(u)= \frac{1}{(1+a^{2k-2n}u^2)^{\frac{k}{2}}(1+a^{2k}u^2)^{\frac{n-k}{2}}} - \frac{1}{(1+b^{2k-2n}u^2)^{\frac{k}{2}}(1+b^{2k}u^2)^{\frac{n-k}{2}}}.$$
	 To use the ideas of Section \ref{SectionMR} it remains to show that $F$ has at most one positive root, unless it is identically equal to zero.
	 To this end observe that $F$ has the same roots as the function $g$ given by
	 $$g(u) = \ln \left((1+a^{2k-2n}u^2)^{\frac{k}{2}}(1+a^{2k}u^2)^{\frac{n-k}{2}}\right) - \ln \left( (1+b^{2k-2n}u^2)^{\frac{k}{2}}(1+b^{2k}u^2)^{\frac{n-k}{2}}\right).$$
Computing its derivative, we get
	 \begin{align*} g'(u)  = 
	 u\left(\frac{ka^{2k-2n}-kb^{2k-2n}}{(1+a^{2k-2n}u^2)(1+b^{2k-2n}u^2)}+\frac{(n-k)a^{2k}-(n-k)b^{2k}}{(1+a^{2k}u^2)(1+b^{2k}u^2)}   \right).
	 \end{align*}
	 One can see that $g'$ has at most two positive real roots. Since $g(0)=0$ and $\lim_{u\to \infty}g(u) = 0$, we conclude that $g$ has at most one positive root.
	  
	Now assume $n$ is even and $V_n(\ce_1) = V_n(\ce_2)=V_n(B_2^n)$, $V_{n/2}(\ce_1) = V_{n/2}(\ce_2)$, $V_k(\ce_1) = V_k(\ce_2)$ for some $k\ne n/2, n$. By Lemma \ref{MainFormula}, $V_{n/2}(\ce_1)$ is (up to a multiplicative constant) equal to 
	$$\int\limits_0^\infty  {u^{-2}} \left(1 - \frac{1}{\big((1+a^{-n}u^2) (1+a^{n}u^2)\big)^{\frac{n}{4}}}\right)\, du. $$
	 The function $(1+a^{-n}u^2) (1+a^{n}u^2)$ considered as a function of $a$ is decreasing from $0$ to $1$ and increasing from $1$ to $\infty$ (for every $u$). Thus the integral above is decreasing/increasing on the same intervals with respect to $a$. Moreover, the integral is invariant under the transformation $a\to a^{-1}$. 
	 Thus, if $V_{n/2}$ is prescribed a given value, then there are at most  two ellipsoids of revolution with this intrinsic volume and they must be polars of each other. Hence either $\ce_2=\ce_1$ or $\ce_2=\ce_1^\circ$. Assume the latter. By virtue of formula \eqref{KZ}, $V_k(\ce_1) = V_k(\ce_1^\circ)$ yields $V_{n-k}(\ce_1) = V_{n-k}(\ce_1^\circ)$. By what we proved above, this information guarantees that $\ce_1=\ce_1^\circ$. Thus $\ce_1=B_2^n$, and therefore $\ce_2=B_2^n$. So $\ce_1$ and $\ce_2$ cannot be different.
	\end{proof}

\end{document}